\documentclass{amsart}

\RequirePackage{fix-cm}
\usepackage{graphicx}
\usepackage{amsmath, amsopn,amstext,amscd,amsfonts,amssymb}
\usepackage{dsfont}
\usepackage{comment}
\usepackage[active]{srcltx}
\usepackage{graphicx, epsfig, subfig}

\usepackage{textcomp}

\usepackage{algorithm}

\makeatletter
\def\BState{\State\hskip-\ALG@thistlm}
\makeatother

\def\downbar#1{
\setbox10=\hbox{$#1$}
            \dimen10=\ht10 \advance\dimen10 by 2.5pt
            \ifdim \dimen10<15pt 
               \advance\dimen10 by -0.5pt
               \dimen11=\dimen10
               \advance\dimen10 by 2.5pt
               \lower \dimen11
            \else \lower \ht10 \fi
            \hbox {\hskip 1.5pt \vrule height \dimen10 depth \dp10}}
\def\upbar#1{
\setbox10=\hbox{$#1$}
            \dimen10=\ht10 \advance\dimen10 by \dp10 \advance\dimen10 by 2.5pt
            \ifdim \dimen10<15pt 
                \advance\dimen10 by 2pt \fi
            \raise 2.5pt \hbox {\hskip -1.5pt \vrule height \dimen10}}

\usepackage{multicol}
\usepackage{colortbl}
\usepackage{exscale}
\usepackage{amssymb,latexsym,amsthm,amsfonts,color,fancyhdr,mathrsfs}
\usepackage{lscape}
\usepackage{rotating}
\usepackage{caption}

\newtheorem{definition}{\bf Definition}[section]
\newtheorem{theorem}{\bf Theorem}[section]
\newtheorem{proposition}{\bf Proposition}[section]
\newtheorem{lemma}{\bf Lemma}[section]

\newtheorem{remark}{\bf Remark}[section]


\numberwithin{equation}{section}
\bibliographystyle{elsarticle-num}

\begin{document}
\title[Coherent pair of measures on lattices]{Coherent pair of measures for orthogonal polynomials on lattices}

\author{D. Mbouna}
\address{Department of Mathematics, Faculty of Sciences, University of Porto, Campo Alegre st., 687, 4169-007 Porto, Portugal}
\email{dieudonne.mbouna@fc.up.pt}



\subjclass[2010]{42C05, 33C45}
\date{\today}
\keywords{semiclassical functional, lattice, coherent pair of measures}
\maketitle
\small{\textit{This work is dedicated to the memory of my beloved friend and mentor J. Petronilho. To him, my eternal gratitude.}}
\\
\begin{abstract}
We consider two sequences of orthogonal polynomials $(P_n)_{n\geq 0}$ and $(Q_n)_{n\geq 0}$ with respect regular functionals ${\bf u}$ and ${\bf v}$, respectively. We assume that  
\begin{align*}
\sum_{j=1} ^{M} a_{j,n}\mathrm{D}_x ^k P_{k+n-j} (z)=\sum_{j=1} ^{N} b_{j,n}\mathrm{D}_x ^{m} Q_{m+n-j} (z)\;,
\end{align*}
with $k,m,M,N \in \mathbb{N}$, $a_{j,n}$ and $b_{j,n}$ are sequences of complex numbers, $$2\mathrm{S}_xf(x(s))=(\triangle +2\,\mathrm{I})f(z),~~ \mathrm{D}_xf(x(s))=\frac{\triangle}{\triangle x(s-1/2)}f(z),$$ $z=x(s-1/2)$, $\mathrm{I}$ is the identity operator, $x$ defines a lattice, and $\triangle f(s)=f(s+1)-f(s)$. We show that under some natural conditions, the functionals ${\bf u}$ and ${\bf v}$ are connected by a rational factor whenever $m=k$, and for 
$k>m$, ${\bf u}$ and ${\bf S}_x ^{k-m}{\bf v}$ are semiclassical functionals and in addition ${\bf S}_x{\bf u}$ and ${\bf S}_x ^{k-m+1}{\bf v}$ are connected by a rational factor. 
This leads to the notion of $(M,N)$-coherent pair of measures of order $(m,k)$ extended to orthogonal polynomials on lattices.

%
\end{abstract}

\section{Introduction}\label{introduction}
Many interesting problems in the theory of orthogonal polynomial sequences (OPS) are inverse problems. That is to analyse properties of linear functionals from the fact that their corresponding OPS satisfy an algebraic equation. For instance, the concept of coherent pair of measures was introduced by Iserles et al. \cite{IKNS1991} motivated by application to certain Sobolev inner product. In \cite{JMPN2015, JP2013} this notion was extended to the notion of $(M,N)-$coherent pair of order $(m,k)$ that is described as follows. Given two monic OPS $(P_n)_{n\geq0}$ and $(Q_n)_{n\geq0}$, we say that $\big((P_n)_{n\geq0},(Q_n)_{n\geq0}\big)$ is an $(M,N)-$coherent pair of order $(m,k)$ if there exist two non-negative integer numbers $M$ and $N$, and sequences of complex numbers
$(a_{n,j})_{n\geq0}$ ($j=0,1,\ldots,M$) and $(b_{n,j})_{n\geq0}$ ($j=0,1,\ldots,N$) such that,
under natural assumptions on the coefficients $a_{n,j}$ and $b_{n,j}$, the structure relation
\begin{align}\label{coherent-pair-intro}
\sum_{j=0}^{M} a_{n,j} P_{n-j}^{[m]} (x) = \sum_{j=0}^{N} b_{n,j} Q_{n-j}^{[k]} (x) \quad (n=0,1,\ldots)
\end{align}
holds. Here we denote
$$
P_{n}^{[m]}(x):=r_n D^m P_{n+m}(x)
$$
by the normalized sequence of ``derivative" of $P_n$ of order $m$ with respect to the operator $D=d/dx$. In \cite{RJPS2015}, this notion is extended for the operators $D=D_q$ and $D=\triangle_{\omega}$, where $$D_qf(x)=\frac{f(qx)-f(x)}{(q-1)x},~~\quad \triangle_{\omega}f(x)=\frac{f(x+\omega)-f(x)}{\omega},~~q\neq 1,~\omega \in \mathbb{C}\setminus \left\lbrace 0 \right\rbrace \;.$$

Recently in \cite{RKDP2020, KCDM2020}, the notion of $\pi_N$-coherent pair with index $M$ of order $(m,k)$ is introduced as a generalization of the concept of $\pi_N$-coherent pair with index $N$ introduced by Maroni and Sfaxi in \cite{MS2000}. This is the situation of \eqref{coherent-pair-intro} where the left hand side is replaced by $\pi_NP_n ^{[m]}$, i.e.
\begin{align}\label{coherent-pair-intro-b}
\pi_N(x)P_{n}^{[m]} (x) = \sum_{j=n-M}^{N} c_{n,j} Q_{j}^{[k]} (x)\;, \quad c_{n,n-M}\neq 0\;,~~(n=0,1,\ldots)
\end{align}
where $\pi_N$ is a polynomial of degree $N$. For a complete history about \eqref{coherent-pair-intro-b} we refer the reader to \cite{RKDP2020, BLN1987, KCDM2020, MS2000} and references therein. 

Let $\textbf{u}$ and $\textbf{v}$ be the moment regular functionals with respect to which
$(P_n)_{n\geq0}$ and $(Q_n)_{n\geq0}$ are respectively orthogonal. Under some conditions imposed only on the coefficients of the considered relation, it is remarkable that either in the situation where \eqref{coherent-pair-intro} or \eqref{coherent-pair-intro-b} holds, and for any of the above mentioned operators $d/dx$, $D_q$ and $\triangle_{\omega}$, functionals $\textbf{u}$ and $\textbf{v}$ are connected by a rational transformation (in the distributional sense) whenever $m=k$, i.e., there exist nonzero polynomials $\Phi$ and $\Psi$ such that $$\Phi\textbf{u}=\Psi\textbf{v}\;.$$ When $m \neq k$, $\textbf{u}$ and $\textbf{v}$ are still connected by a rational transformation and, in addition, they are semiclassical. This means there exist four nonzero polynomials $\Phi_1$, $\Phi_2$,$\Psi_1$ and $\Psi_2$ such that $${\bf D}(\Phi_1 {\bf u})=\Psi_1 {\bf u} ,~~ {\bf D}(\Phi_2 {\bf v})=\Psi_2 {\bf v} \;.$$ 

 The main objective of this work is to extend this notion to the theory of OPS on lattices and therefore generalize many known results in the literature about the subject (see \cite{KLM2001, MBP1993, MBP1994, MMM2001} and references therein). For this reason we consider \eqref{coherent-pair-intro} (and \eqref{coherent-pair-intro-b} for $k=0$) for an operator on lattices defined by 
$$D_xf(x(s))=\frac{\triangle}{\triangle x(s-1/2)} f(x(s-1/2))\;,$$ where $x(s)$ is a lattice and $\triangle f(s)=f(s+1)-f(s)$ and study the semiclassical character of the involved OPS. This will be done thanks to the recent result presented in \cite{KDP2020a} and contrary to the previously mentioned cases, results for this operator are unexpected (see Remark \ref{clarifications}).

 The structure of this work is as follows. Section 2 presents some basic facts of the algebraic theory of OPS on lattices. In Section \ref{main-results} our main results are stated and proved.

\section{Background and Preliminary}
Along this note we are going to use the algebraic approach on OPS developed by P. Maroni \cite{M1991}. Here are some basic facts of the theory. 

 Let $\mathcal{P}$ be the vector space of all polynomials with complex coefficients
and let $\mathcal{P}^*$ be its algebraic dual. A simple set in $\mathcal{P}$ is a sequence $(R_n)_{n\geq0}$ such that $\mathrm{deg}(R_n)=n$ for each $n$. A simple set $(R_n)_{n\geq0}$ is called an OPS with respect to ${\bf w}\in\mathcal{P}^*$ if 
$$
\langle{\bf w},R_nR_m\rangle=h_n\delta_{n,m}\;,\quad m=0,1,\ldots;\;h_n\in\mathbb{C}\setminus\{0\}\;.
$$
In this case, we say that ${\bf w}$ is  regular. The left multiplication of a functional ${\bf w}$ by a polynomial $\phi$ is defined by
$$
\left\langle \phi {\bf w}, p  \right\rangle =\left\langle {\bf w},\phi p  \right\rangle\;, \quad p\in \mathcal{P}\;.
$$

A dual basis $({\bf r}_n)_{n\geq 0}$ of a simple set polynomial sequence $(R_n)_{n\geq 0}$ is a sequence in $\mathcal{P}^*$ such that $\left\langle {\bf r}_n, R_m   \right\rangle =\delta_{n,m}$, for all $n,m$. In addition any functional ${\bf v} \in \mathcal{P}^*$ (when $\mathcal{P}$ is endowed with an appropriate strict inductive limit topology, see \cite{M1991}) can be written in the sense of the weak topology in $\mathcal{P}^*$ as 
\begin{align*}
{\bf v} = \sum_{n=0} ^{\infty} \left\langle {\bf v}, R_n \right\rangle {\bf r}_n\;,
\end{align*}
where $({\bf r}_n)_{n\geq 0}$ is the dual basis associated to the sequence of simple set polynomials $(R_n)_{n\geq 0}$. Consequently, if $(R_n)_{n\geq0}$ is a (monic) OPS with respect to ${\bf w}\in\mathcal{P}^*$, then the corresponding dual basis is explicitly given by 
\begin{align}\label{expression-an}
{\bf r}_n =\left\langle {\bf w} , R_n ^2 \right\rangle ^{-1} R_n{\bf w}\;.
\end{align}

It is known (see \cite{C1978}) that a monic OPS, $(R_n)_{n\geq 0}$, is characterized by the following three-term recurrence relation (TTRR):
\begin{align}\label{TTRR_relation}
R_{-1} (z)=0, \quad R_{n+1} (z) =(z-B_n)R_n (z)-C_n R_{n-1} (z)\;, \quad C_n \neq 0\;,
\end{align}
and, therefore,
\begin{align}\label{TTRR_coefficients}
B_n = \frac{\left\langle {\bf w} , zR_n ^2 \right\rangle}{\left\langle {\bf w} , R_n ^2 \right\rangle},\quad C_{n+1}  = \frac{\left\langle {\bf w} , R_{n+1} ^2 \right\rangle}{\left\langle {\bf w} , R_n ^2 \right\rangle}.
\end{align}

In our framework, a lattice $x$ is a mapping given by (see \cite{ARS1995})
\begin{equation}
\label{xs-def}
x(s):=\left\{
\begin{array}{lcl}
\mathfrak{c}_1 q^{-s} +\mathfrak{c}_2 q^s +\mathfrak{c}_3,&  q\neq1\\ [7pt]
\mathfrak{c}_4 s^2 + \mathfrak{c}_5 s +\mathfrak{c}_6, &  q =1,
\end{array}
\right.
\end{equation}
where $q>0$ and $\mathfrak{c}_j$ ($1\leq j\leq6$) are complex numbers such that $(\mathfrak{c}_1,\mathfrak{c}_2)\neq(0,0)$ if $q\neq1$.
Note that
$x\big(s+\frac12\big)+x\big(s-\frac12\big)=2\alpha x(s)+2\beta,$
where
\begin{equation}\label{alpha-beta}
\alpha=\frac{q^{1/2}+q^{-1/2}}{2},\quad
\beta=\left\{
\begin{array}{lcl}
(1-\alpha)\mathfrak{c}_3, &  q\neq1,\\ [7pt]
\mathfrak{c}_4/4, &  q =1.
\end{array}
\right.
\end{equation}
We define $\alpha_n:=(q^{n/2} +q^{-n/2})/2$ and
\begin{align*}
\gamma_n := \left\{
\begin{array}{lcl}
\displaystyle\frac{q^{n/2}-q^{-n/2}}{q^{1/2}-q^{-1/2}}, & q\neq1 \\ [7pt]
n, & q=1.
\end{array}
\right. 
\end{align*}
We set $\gamma_{-1}:=-1$ and $\alpha_{-1}:=\alpha$. 
We also define two operators $\mathrm{D}_x$ and $\mathrm{S}_x$ on $\mathcal{P}$ by 
\begin{align*}
\mathrm{D}_x f(x(s))&=\frac{\triangle}{{ \triangle} x(s-1/2)}f(x(s-1/2))\;,\\
\mathrm{S}_x f(x(s))&= \frac{1}{2}(\triangle+2\,\mathrm{I})f(x(s-1/2))\;.
\end{align*}
These operators  induce two elements on $\mathcal{P}^*$, say $\mathbf{D}_x$ and $\mathbf{S}_x$, via the following definition (see \cite{FK-NM2011}): 
\begin{align*}
\langle \mathbf{D}_x{\bf u},f\rangle=-\langle {\bf u},\mathrm{D}_x f\rangle,\quad \langle\mathbf{S}_x{\bf u},f\rangle=\langle {\bf u},\mathrm{S}_x f\rangle.
\end{align*}

\begin{definition} \cite{FK-NM2011}
A regular functional ${\bf u}$ is said to be semiclassical if there exist two nonzero polynomials $\phi$ and $\psi$ such that the following distributional equation $$\phi{\bf D}_x{\bf u} =\psi {\bf S}_x{\bf u}$$ holds. We also say that the corresponding OPS is semiclassical. If polynomials $\phi$ and $\psi$ are of degrees at most two and one, respectively, then ${\bf u}$ is said classical functional. 
\end{definition}

 Let $f,g\in\mathcal{P}$ and ${\bf u}\in\mathcal{P}^*$. Then the following properties hold (see e.g. \cite{KDP2020a, FK-NM2011, SMMFPN2017}):
\begin{align}
\mathrm{D}_x \big(fg\big)&= \big(\mathrm{D}_x f\big)\big(\mathrm{S}_x g\big)+\big(\mathrm{S}_x f\big)\big(\mathrm{D}_x g\big), \label{def-Dx-fg} \\[7pt]
\mathrm{S}_x \big( fg\big)&=\big(\mathrm{D}_x f\big) \big(\mathrm{D}_x g\big)\texttt{U}_2  +\big(\mathrm{S}_x f\big) \big(\mathrm{S}_x g\big), \label{def-Sx-fg} \\[7pt]
f\mathrm{D}_xg&=\mathrm{D}_x\left[ \Big(\mathrm{S}_xf-\frac{\texttt{U}_1}{\alpha}\mathrm{D}_xf \Big)g\right]-\alpha ^{-1}\mathrm{S}_x \Big(g\mathrm{D}_x f\Big) , \label{def-fDxg} \\[7pt]
{\bf D}_x (f{\bf u})&=\Big(\mathrm{S}_xf-\alpha ^{-1} \texttt{U}_1\mathrm{D}_xf \Big){\bf D}_x {\bf u}+\alpha ^{-1}\mathrm{D}_xf~{\bf S}_x{\bf u},\label{def_D_xfu}\\[7pt]
{\bf S}_x (f{\bf u})&=\Big(\alpha \texttt{U}_2 -\alpha^{-1}\texttt{U}_1 ^2 \Big)\mathrm{D}_x f~{\bf D}_x{\bf u} +\Big(\mathrm{S}_xf+\alpha ^{-1} \texttt{U}_1\mathrm{D}_xf \Big){\bf S}_x{\bf u},\label{def_S_xfu}\\[7pt]
f{\bf D}_x {\bf u}&={\bf D}_x\left(S_xf~{\bf u}  \right)-{\bf S}_x\left(D_xf~{\bf u}  \right), \label{def-fD_x-u}\\[7pt]
\alpha \mathbf{D}_x ^n \mathbf{S}_x {\bf u}&= \alpha_{n+1} \mathbf{S}_x \mathbf{D}_x^n {\bf u}
+\gamma_n \texttt{U}_1\mathbf{D}_x^{n+1}{\bf u}, \label{DxnSx-u} 
\end{align}
where
\begin{align*}
\texttt{U}_1 (z)&=\left\{
\begin{array}{lcl}
(\alpha^2-1)\big(z-\mathfrak{c}_3\big), & q\neq1\\[7pt]
2\beta, &q=1,
\end{array}
\right. \\[7pt]
\texttt{U}_2(z)&=\left\{
\begin{array}{lcl}
(\alpha^2-1)\big((z-\mathfrak{c}_3)^2-4\mathfrak{c}_1\mathfrak{c}_2\big), & q\neq1\\[7pt]
4\beta (z-\mathfrak{c}_6)+\mathfrak{c}_5^2/4, & q=1.
\end{array}
\right.
\end{align*}

We denote by $R_n ^{[k]}$ $(k=0,1,\ldots)$ the monic polynomial of degree $n$ defined by
\begin{align*}
R_n ^{[k]} (z)=\frac{\gamma_{n} !}{\gamma_{n+k} !} \mathrm{D}_x ^k R_{n+k} (z)\;,
\end{align*}
with $\gamma_0 !=1$, $\gamma_{n+1}!=\gamma_1\cdots \gamma_n \gamma_{n+1}$. If $({\bf r}^{[k]} _n)_{n\geq 0}$ is the dual basis associated to the sequence $(R_n ^{[k]})_{n\geq 0}$, it is known that
\begin{align}
{\bf D}_x ^k {\bf r}^{[k]} _n=(-1)^k \frac{\gamma_{n+k}!}{\gamma_n ! }{\bf r}_{n+k}\;,\quad k=0,1,\ldots\;. \label{basis-Dx-derivatives}
\end{align}
The following result is helpful.
\begin{proposition}\cite{KDP2020a} \label{Leibniz-rule-NUL}
Let ${\bf u}\in\mathcal{P}^*$ and $f\in\mathcal{P}$. Then
\begin{align}
\mathbf{D}_x ^n \big(f{\bf u}\big)
=\sum_{k=0}^{n} \mathrm{T}_{n,k}f\, \mathbf{D}_x^{n-k} \mathbf{S}_x^k {\bf u}
 \quad (n=0,1,2,\ldots),\label{leibnizfor-NUL}
\end{align}
where $\mathrm{T}_{n,k}f$ is a polynomial defined by $\mathrm{T}_{0,0}f =f$ and
\begin{align}\label{Tnk}
\mathrm{T}_{n,k}f&:= \mathrm{S}_x \mathrm{T}_{n-1,k}f
-\frac{\gamma_{n-k}}{ \alpha_{n-k}}\texttt{U}_1 \mathrm{D}_x \mathrm{T}_{n-1,k}f
+\frac{1}{\alpha_{n+1-k}} \mathrm{D}_x \mathrm{T}_{n-1,k-1}f\,,
\end{align}
with $\mathrm{T}_{n,k}f=0$ whenever $k>n$ or $k<0$. Moreover 
$\deg \mathrm{T}_{n,k}f\leq\deg f-k$.
\end{proposition}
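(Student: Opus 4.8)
The plan is to proceed by induction on $n$, using the first-order product rule \eqref{def_D_xfu} as the engine and the commutation relation \eqref{DxnSx-u} to restore, at each step, the canonical ordering $\mathbf{D}_x^{n-k}\mathbf{S}_x^k{\bf u}$. The base case $n=0$ is immediate from $\mathrm{T}_{0,0}f=f$, while $n=1$ is exactly \eqref{def_D_xfu}, provided one reads off from \eqref{Tnk} that $\mathrm{T}_{1,0}f=\mathrm{S}_xf-\alpha^{-1}\texttt{U}_1\mathrm{D}_xf$ and $\mathrm{T}_{1,1}f=\alpha^{-1}\mathrm{D}_xf$ (recall $\gamma_1=1$ and $\alpha_1=\alpha$).

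For the inductive step I would assume \eqref{leibnizfor-NUL} for $n-1$, apply $\mathbf{D}_x$, distribute it across the sum, and then invoke \eqref{def_D_xfu} on each summand $\mathrm{T}_{n-1,k}f\,\mathbf{D}_x^{n-1-k}\mathbf{S}_x^k{\bf u}$. This produces two kinds of terms: those already of the form $\bigl(\mathrm{S}_x\mathrm{T}_{n-1,k}f-\alpha^{-1}\texttt{U}_1\mathrm{D}_x\mathrm{T}_{n-1,k}f\bigr)\mathbf{D}_x^{n-k}\mathbf{S}_x^k{\bf u}$, and those of the form $\alpha^{-1}\mathrm{D}_x\mathrm{T}_{n-1,k}f\cdot\mathbf{S}_x\mathbf{D}_x^{n-1-k}\mathbf{S}_x^k{\bf u}$ in which the operator $\mathbf{S}_x$ sits on the wrong side. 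The decisive step is to rewrite the latter through \eqref{DxnSx-u}, with $n$ replaced by $n-1-k$ and ${\bf u}$ by $\mathbf{S}_x^k{\bf u}$:
\begin{align*}
\mathbf{S}_x\mathbf{D}_x^{n-1-k}\mathbf{S}_x^k{\bf u}
=\frac{\alpha}{\alpha_{n-k}}\mathbf{D}_x^{n-1-k}\mathbf{S}_x^{k+1}{\bf u}
-\frac{\gamma_{n-1-k}}{\alpha_{n-k}}\texttt{U}_1\mathbf{D}_x^{n-k}\mathbf{S}_x^k{\bf u}\;,
\end{align*}
which trades the misplaced $\mathbf{S}_x$ for a term carrying the index shift $k\mapsto k+1$ and a term carrying an extra factor $\texttt{U}_1$.

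After re-indexing, the coefficient of $\mathbf{D}_x^{n-k}\mathbf{S}_x^k{\bf u}$ collects three pieces: $\mathrm{S}_x\mathrm{T}_{n-1,k}f$; the shifted term $\alpha_{n+1-k}^{-1}\mathrm{D}_x\mathrm{T}_{n-1,k-1}f$; and two contributions proportional to $\texttt{U}_1\mathrm{D}_x\mathrm{T}_{n-1,k}f$, one coming from \eqref{def_D_xfu} and one from the displayed identity. Matching against \eqref{Tnk} therefore reduces to verifying that the latter two combine correctly, i.e. that
\begin{align*}
\frac{1}{\alpha}+\frac{\gamma_{n-1-k}}{\alpha\,\alpha_{n-k}}=\frac{\gamma_{n-k}}{\alpha_{n-k}}\;,
\end{align*}
equivalently $\gamma_{m+1}-\gamma_{m-1}=2\alpha_m$ with $m=n-k$. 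I expect this elementary $\alpha$--$\gamma$ identity, together with the careful bookkeeping of the index shifts and of the boundary slots $k=0$ and $k=n$, to be the only genuine obstacle. It follows at once from the closed forms: writing $p=q^{1/2}$ for $q\neq1$ one computes $\gamma_{m+1}-\gamma_{m-1}=p^m+p^{-m}=2\alpha_m$, and the case $q=1$ is direct.

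Finally, the degree estimate $\deg\mathrm{T}_{n,k}f\leq\deg f-k$ follows by a parallel induction read straight off \eqref{Tnk}, using that $\mathrm{S}_x$ preserves degree, $\mathrm{D}_x$ lowers it by one, and $\deg\texttt{U}_1\leq1$: each of the three terms on the right of \eqref{Tnk} then has degree at most $\deg f-k$, and the bound propagates.
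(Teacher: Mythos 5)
Your induction is correct and complete: the base cases, the key step of restoring the canonical ordering via \eqref{DxnSx-u} applied to $\mathbf{S}_x\mathbf{D}_x^{n-1-k}\mathbf{S}_x^k{\bf u}$, the collection of coefficients matching \eqref{Tnk} (the needed identity amounts to $\alpha_{m}+\gamma_{m-1}=\alpha\gamma_{m}$ with $m=n-k$, which your closed-form computation with $p=q^{1/2}$ settles in one line, the $q=1$ case being immediate), and the degree bound all check out, including the boundary slots $k=0$ and $k=n$ where $\gamma_0=0$ and $\mathrm{T}_{n-1,-1}f=\mathrm{T}_{n-1,n}f=0$ make the recurrence consistent. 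The paper states this proposition without proof, importing it from \cite{KDP2020a}, and the argument there is the same induction on $n$ driven by the product rule \eqref{def_D_xfu} and the commutation relation \eqref{DxnSx-u}, so your route coincides with the source's.
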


\begin{definition}
Two functionals ${\bf u}$ and ${\bf v}$ are connected by a rational modification (in the distribution sense) if there exist two nonzero polynomials $\phi$ and $\psi$ such that $\phi {\bf u}=\psi {\bf v}$.
\end{definition}

\section{main results}\label{main-results}
The following fact is known for the following operators $\left\lbrace \frac{d}{dx}, \triangle_{\omega}, D_q \right\rbrace$, where $D_q$ is the $q$-Jackson operator, but for the $D_x$ operator, as far as we know, this is new.
\begin{proposition}
Let ${\bf u}$ and ${\bf v}$ be two functionals connected by a rational modification. Then if one of them is semiclassical, then so is the other one.
\end{proposition}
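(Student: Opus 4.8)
The plan is to prove only ``${\bf u}$ semiclassical $\Rightarrow$ ${\bf v}$ semiclassical'', the converse being identical since the hypothesis $a\,{\bf u}=b\,{\bf v}$ (with $a,b$ nonzero polynomials) is symmetric in ${\bf u}$ and ${\bf v}$. So suppose $\phi\,{\bf D}_x{\bf u}=\psi\,{\bf S}_x{\bf u}$ with $\phi,\psi$ nonzero. For $g\in\mathcal P$ set
\begin{align*}
A_g=\mathrm{S}_xg-\tfrac1\alpha\texttt{U}_1\mathrm{D}_xg,\quad B_g=\tfrac1\alpha\mathrm{D}_xg,\quad C_g=\big(\alpha\texttt{U}_2-\tfrac1\alpha\texttt{U}_1^2\big)\mathrm{D}_xg,\quad D_g=\mathrm{S}_xg+\tfrac1\alpha\texttt{U}_1\mathrm{D}_xg,
\end{align*}
and $M_g=\begin{pmatrix}A_g&B_g\\ C_g&D_g\end{pmatrix}$, so that \eqref{def_D_xfu}--\eqref{def_S_xfu} read ${\bf D}_x(g{\bf w})=A_g{\bf D}_x{\bf w}+B_g{\bf S}_x{\bf w}$ and ${\bf S}_x(g{\bf w})=C_g{\bf D}_x{\bf w}+D_g{\bf S}_x{\bf w}$ for every ${\bf w}$. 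A direct computation gives $\det M_g=(\mathrm{S}_xg)^2-\texttt{U}_2(\mathrm{D}_xg)^2=:\Delta_g$.

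First I would apply ${\bf D}_x$ and ${\bf S}_x$ to $a{\bf u}=b{\bf v}$ and record the two identities as one matrix equation,
\begin{align*}
M_a\begin{pmatrix}{\bf D}_x{\bf u}\\ {\bf S}_x{\bf u}\end{pmatrix}=M_b\begin{pmatrix}{\bf D}_x{\bf v}\\ {\bf S}_x{\bf v}\end{pmatrix}.
\end{align*}
Multiplying on the left by $\mathrm{adj}(M_a)$ (a matrix of polynomials, so this acts legitimately on functionals) replaces the left-hand side by $\Delta_a\,({\bf D}_x{\bf u},{\bf S}_x{\bf u})^{\top}$. Contracting with the row $(\phi,-\psi)$ supplied by the semiclassical equation $\phi{\bf D}_x{\bf u}-\psi{\bf S}_x{\bf u}=0$ annihilates the left-hand side, $\Delta_a(\phi{\bf D}_x{\bf u}-\psi{\bf S}_x{\bf u})=0$, and leaves a relation in ${\bf v}$ alone,
\begin{align*}
\widehat\phi\,{\bf D}_x{\bf v}=\widehat\psi\,{\bf S}_x{\bf v},\qquad (\widehat\phi,\,-\widehat\psi):=(\phi,-\psi)\,\mathrm{adj}(M_a)\,M_b,
\end{align*}
a candidate semiclassical equation for ${\bf v}$.

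The crux is to show that $\widehat\phi$ and $\widehat\psi$ are \emph{both} nonzero. Here two facts combine. On one hand, they cannot both vanish: $\widehat\phi=\widehat\psi=0$ is precisely the polynomial identity $(\phi,-\psi)\,\mathrm{adj}(M_a)\,M_b=0$; but a short computation using $\alpha_d^2-(\alpha^2-1)\gamma_d^2=\alpha_0=1$ (with $d=\deg g$) shows that the leading coefficient of $\Delta_g=(\mathrm{S}_xg)^2-\texttt{U}_2(\mathrm{D}_xg)^2$ equals the square of that of $g$, so $\Delta_a=\det M_a\neq0$ and $\Delta_b=\det M_b\neq0$; hence $M_a$, $M_b$, and thus $\mathrm{adj}(M_a)M_b$, are invertible over the field $\mathbb{C}(z)$, and a nonzero row vector cannot annihilate an invertible matrix, forcing $(\phi,-\psi)=0$ — impossible. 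On the other hand, $\widehat\phi=0$ if and only if $\widehat\psi=0$: if, say, $\widehat\phi=0$ then $\widehat\psi\,{\bf S}_x{\bf v}=0$, and since ${\bf v}$ is regular neither ${\bf S}_x{\bf v}$ nor ${\bf D}_x{\bf v}$ is annihilated by a nonzero polynomial, so $\widehat\psi=0$. Combining the two, $\widehat\phi\neq0$ and $\widehat\psi\neq0$, the displayed relation is a bona fide semiclassical equation, and ${\bf v}$ is semiclassical.

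The step I expect to be the main obstacle is exactly this torsion-freeness of ${\bf D}_x{\bf v}$ and ${\bf S}_x{\bf v}$: the determinant computation excluding $\widehat\phi=\widehat\psi=0$ is clean, but promoting ``not both zero'' to ``both nonzero'' rests on showing that $\pi\,{\bf S}_x{\bf v}=0$ or $\pi\,{\bf D}_x{\bf v}=0$ with $\pi\neq0$ is incompatible with the nonvanishing of every Hankel determinant of the regular functional ${\bf v}$, which I would extract from the explicit degree-wise action of the degree-preserving bijection $\mathrm{S}_x$ and of the degree-lowering map $\mathrm{D}_x$ on $\mathcal P$ (both carrying nonvanishing leading factors $\alpha_n$, $\gamma_n$).
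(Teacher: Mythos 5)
Your elimination step is, in substance, exactly the paper's own proof written in matrix form: the paper applies ${\bf D}_x$ and ${\bf S}_x$ to $\pi_2{\bf u}=\pi_1{\bf v}$ via \eqref{def_D_xfu}--\eqref{def_S_xfu}, trades $\phi\,{\bf D}_x{\bf u}$ for $\psi\,{\bf S}_x{\bf u}$ to obtain \eqref{with-K1}--\eqref{with-K2}, and eliminates ${\bf S}_x{\bf u}$, while you perform the identical linear algebra by contracting the row $(\phi,-\psi)$ against $\mathrm{adj}(M_a)M_b$. Your additions beyond the paper are partly sound and valuable: the identity $\det M_g=(\mathrm{S}_xg)^2-\texttt{U}_2(\mathrm{D}_xg)^2$ is correct, its leading coefficient is indeed the square of that of $g$ (via $\alpha_d^2-(\alpha^2-1)\gamma_d^2=1$ when $q\neq1$, trivially when $q=1$), and this rigorously gives $(\widehat\phi,\widehat\psi)\neq(0,0)$ --- a nondegeneracy point on which the paper's proof is entirely silent (it exhibits $\Phi$ and $\Psi$ and simply asserts the result follows).

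The genuine gap is the step you yourself flag as the main obstacle, and your proposed patch cannot work: it is \emph{false} that a regular ${\bf v}$ admits no nonzero polynomial annihilating ${\bf S}_x{\bf v}$. Take the linear lattice $x(s)=s$ (i.e.\ $q=1$, $\mathfrak{c}_4=0$, $\mathfrak{c}_5=1$, $\mathfrak{c}_6=0$). Since $\mathrm{S}_x$ is a degree-preserving bijection of $\mathcal{P}$, define ${\bf v}$ by $\langle{\bf v},f\rangle:=(\mathrm{S}_x^{-1}f)(0)$; then ${\bf S}_x{\bf v}=\delta_0$, the evaluation at $0$, so $z\,{\bf S}_x{\bf v}=0$ with the nonzero polynomial $\pi(z)=z$. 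Yet this ${\bf v}$ is regular: on this lattice $\mathrm{S}_x$ acts formally on $e^{tz}$ as multiplication by $\cosh(t/2)$, so the moments satisfy $\sum_{n\geq0}\mu_nt^n/n!=1/\cosh(t/2)$, i.e.\ $\mu_{2k+1}=0$ and $\mu_{2k}=(-1)^kE_{2k}/4^k$ with $E_{2k}$ the secant numbers ($\mu_0=1$, $\mu_2=-1/4$, $\mu_4=5/16$, $\mu_6=-61/64,\ldots$). Equivalently $\mu_n=i^nm_n$, where $m_n=\int_{\mathbb{R}}x^n\operatorname{sech}(\pi x)\,dx$, since $\int_{\mathbb{R}} e^{itx}\operatorname{sech}(\pi x)\,dx=\operatorname{sech}(t/2)$; hence the $n$-th Hankel determinant of $(\mu_n)$ equals $(-1)^{n(n+1)/2}$ times the corresponding Hankel determinant of the positive weight $\operatorname{sech}(\pi x)$, which is strictly positive, so none vanishes. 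Consequently your implication ``$\widehat\phi=0\Rightarrow\widehat\psi=0$'' rests on a false lemma, and ``not both zero'' cannot be upgraded to ``both nonzero'' by torsion-freeness. Since the paper's definition of semiclassical requires two \emph{nonzero} polynomials, your argument is incomplete at exactly this point --- though, to be fair, the paper's own proof never addresses nonvanishing at all, so up to that step you have proved strictly more than the paper does; closing the remaining cases $\widehat\phi=0$ or $\widehat\psi=0$ would require a separate argument (e.g.\ ruling out zero entries of $(\phi,-\psi)\,\mathrm{adj}(M_a)M_b$, or deriving a second independent relation for ${\bf v}$), not the torsion-freeness claim.
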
 
\begin{proof}
Let ${\bf u}$ and ${\bf v}$ be two functionals and assume that ${\bf u}$ is semiclassical. Then
\begin{align}\label{relation-rational-modification}
\pi_2 {\bf u}=\pi_1 {\bf v}\;,\quad \phi {\bf D}_x {\bf u}=\psi {\bf S}_x{\bf u}\;,,
\end{align}
for some nonzero polynomials $\phi$, $\psi$, $\pi_1$ and $\pi_2$. \\
We firstly apply ${\bf D}_x$ to the first relation in \eqref{relation-rational-modification} using \eqref{def_D_xfu} to obtain
\begin{align*}
\big( \mathrm{S}_x \pi_2 -\frac{\texttt{U}_1}{\alpha}\mathrm{D}_x\pi_2 \big){\bf D}_x{\bf u} +\frac{1}{\alpha}\mathrm{D}_x\pi_2 {\bf S}_x{\bf u}=\big( \mathrm{S}_x \pi_1 -\frac{\texttt{U}_1}{\alpha}\mathrm{D}_x\pi_1 \big){\bf D}_x{\bf v} +\frac{1}{\alpha}\mathrm{D}_x\pi_1 {\bf S}_x{\bf v}\;.
\end{align*}
Hence 
\begin{align}\label{with-K1}
K_1{\bf S}_x{\bf u}= \phi \Big(\big( \mathrm{S}_x \pi_1 -\frac{\texttt{U}_1}{\alpha}\mathrm{D}_x\pi_1 \big){\bf D}_x{\bf v} +\frac{1}{\alpha}\mathrm{D}_x\pi_1 {\bf S}_x{\bf v}  \Big)\;,
\end{align}
where $\alpha K_1=\phi \mathrm{D}_x \pi_2 +(\alpha \mathrm{S}_x \pi_2 -\mathtt{U}_1 \mathrm{D}_x \pi_2)\psi$, holds by multiplying the above equation by $\phi$ and using the second equation in \eqref{relation-rational-modification}. \\
We secondly apply ${\bf S}_x$ to the first relation in \eqref{relation-rational-modification} using \eqref{def_S_xfu} and repeat the process to obtain
\begin{align}\label{with-K2}
K_2{\bf S}_x{\bf u}= \phi \Big(\big(\alpha \texttt{U}_2 -\frac{\texttt{U}_1 ^2}{\alpha}  \big)\mathrm{D}_x\pi_1{\bf D}_x{\bf v} +\big( \mathrm{S}_x \pi_1 +\frac{\texttt{U}_1}{\alpha}\mathrm{D}_x\pi_1 \big){\bf S}_x{\bf v}   \Big)\;,
\end{align}
where $\alpha K_2= \phi \big(\alpha \mathrm{S}_x \pi_2 +\texttt{U}_1\mathrm{D}_x\pi_2 \big)  +\psi \big(\alpha^2 \texttt{U}_2-\texttt{U}_1 ^2  \big)\mathrm{D}_x\pi_2$.
By eliminating ${\bf S}_x{\bf u}$ in \eqref{with-K1}--\eqref{with-K2}, we finally obtain
$$\Phi {\bf D}_x{\bf v}+\Psi {\bf S}_x{\bf v}=0\;,$$
where
\begin{align*}
\Phi&=\Big(\phi \mathrm{D}_x \pi_2 +(\alpha \mathrm{S}_x \pi_2 -\mathtt{U}_1 \mathrm{D}_x \pi_2)\psi  \Big)\big(\texttt{U}_1 ^2 -\alpha ^2\texttt{U}_2  \big)\mathrm{D}_x\pi_1 \\
&+\big(\alpha\mathrm{S}_x \pi_1-\mathtt{U}_1\mathrm{D}_x\pi_1\big)\Big(\phi \big(\alpha \mathrm{S}_x \pi_2 +\texttt{U}_1\mathrm{D}_x\pi_2 \big)  +\psi \big(\alpha^2 \texttt{U}_2-\texttt{U}_1 ^2  \big)\mathrm{D}_x\pi_2  \Big)\;,  \\
\Psi&=\big(\alpha \mathrm{S}_x\pi_1 +\texttt{U}_1\mathrm{D}_x\pi_1  \big) \Big(\phi \mathrm{D}_x \pi_2 +(\alpha \mathrm{S}_x \pi_2 -\mathtt{U}_1 \mathrm{D}_x \pi_2)\psi \big)  \\
&-\Big(\phi \big(\alpha \mathrm{S}_x \pi_2 +\texttt{U}_1\mathrm{D}_x\pi_2 \big)  +\psi \big(\alpha^2 \texttt{U}_2-\texttt{U}_1 ^2  \big)\mathrm{D}_x\pi_2  \Big)\mathrm{D}_x\pi_1,\\
\end{align*}
and the desired result follows.
\end{proof}

We start with the following lemma using ideas developed in \cite{RJPS2015, JP2006}.
\begin{lemma}\label{lemma1}
Let $({\bf u}, {\bf v})$ be a pair of regular functionals and $((P_n)_{n\geq 0}, (Q_n)_{n\geq 0})$ the corresponding pair of monic OPS. Assume that for some $k,m,M,N \in \mathbb{N}$, we have 
\begin{align}\label{general_problem}
\sum_{j=0} ^{M} a_{j,n} P_{n-j} ^{[k]} (z)=\sum_{j=0} ^{N} b_{j,n} Q_{n-j} ^{[m]} (z)\;, \quad \quad n=0,1,\ldots\;,
\end{align}
for some complex sequences $a_{i,n}$ and $b_{i,n}$, with $a_{0,n}=1=b_{0,n}$ and $a_{M,n}b_{N,n}\neq 0$ for all $n$. Let $\mathcal{A}_{M+N}=\Big[l_{i,j}\Big]_{i,j=0} ^{M+N-1}$ be the following matrix of order $M+N$,
\begin{align*}
l_{i,j} = \left\{
\begin{array}{lclcl}
\displaystyle a_{j-i,j}, ~ \textit{if}~~0\leq i\leq N-1~\textit{and}~ i\leq j\leq M+i  \\ [7pt]
b_{j-i+N,j}, ~ \textit{if}~~N\leq i\leq M+N-1~\textit{and}~ i-N\leq j\leq i \\
0,~ \textit{else}
\end{array}
\right. 
\end{align*}
Assume that $det(\mathcal{A}_{M+N})\neq 0$ and $k\geq m$. Then there exist two polynomials $\psi_{N+k+n}$ and $\phi_{M+m+n}$ with degrees $N+k+n$ and $M+m+n$, respectively, such that 
\begin{align}\label{relation_with_dual_basis}
\psi_{N+k+n}{\bf u}= {\bf D}_x ^{k-m}\Big(\phi_{M+m+n}{\bf v} \Big)\;, ~\quad\quad~n=0,1,\ldots\;.
\end{align} 
\end{lemma}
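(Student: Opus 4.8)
The plan is to exploit the dual bases associated to the sequences $(P_n^{[k]})_{n\geq0}$ and $(Q_n^{[m]})_{n\geq0}$ and to translate the scalar structure relation \eqref{general_problem} into an identity among functionals. Let me think about this carefully.

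Let me denote by $(\mathbf{a}_n^{[k]})_{n\geq0}$ the dual basis of $(P_n^{[k]})_{n\geq0}$ and by $(\mathbf{b}_n^{[m]})_{n\geq0}$ the dual basis of $(Q_n^{[m]})_{n\geq0}$. The key relation from the background is \eqref{basis-Dx-derivatives}, which tells us that applying $\mathbf{D}_x^k$ to $\mathbf{a}_n^{[k]}$ recovers (up to the factor $(-1)^k\gamma_{n+k}!/\gamma_n!$) the dual basis element $\mathbf{a}_{n+k}$ of the original OPS $(P_n)$, and since $(P_n)$ is orthogonal with respect to $\mathbf{u}$, formula \eqref{expression-an} gives $\mathbf{a}_{n+k}$ explicitly as a polynomial multiple of $\mathbf{u}$.

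So here's what I would do.

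First, I would fix $n$ and read \eqref{general_problem} as an expansion: both sides are polynomials, so their coefficients against the dual bases must match. The finite-band structure of \eqref{general_problem} (only indices $n-M,\dots,n$ appear on the left and $n-N,\dots,n$ on the right) is exactly what the matrix $\mathcal{A}_{M+N}$ encodes; the hypothesis $\det(\mathcal{A}_{M+N})\neq0$ should let me invert the band relation and solve for one family of dual-basis functionals in terms of the other over a window of $M+N$ consecutive indices. Concretely, I expect to derive a relation of the form $\mathbf{a}_n^{[k]} = \sum_j c_{j,n}\,\mathbf{b}_{n+\text{shift}}^{[m]}$ (a finite sum) valid for all $n$, with the nonvanishing determinant guaranteeing that the leading and trailing coefficients behave correctly so the degrees come out right.

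Second, I would apply $\mathbf{D}_x^{k}$ to the $P$-side and $\mathbf{D}_x^{m}$ to the $Q$-side and invoke \eqref{basis-Dx-derivatives} twice. This converts $\mathbf{a}_n^{[k]}$ into a multiple of $\mathbf{a}_{n+k}=\langle\mathbf{u},P_{n+k}^2\rangle^{-1}P_{n+k}\mathbf{u}$ and $\mathbf{b}_n^{[m]}$ into a multiple of $\mathbf{b}_{n+m}=\langle\mathbf{v},Q_{n+m}^2\rangle^{-1}Q_{n+m}\mathbf{v}$. Because $k\geq m$, I would apply the \emph{extra} $k-m$ copies of $\mathbf{D}_x$ only to the $\mathbf{v}$-side so that both sides carry a total of $k$ derivatives; this is what produces the operator $\mathbf{D}_x^{k-m}$ on the right of \eqref{relation_with_dual_basis}. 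After clearing the scalar normalizations $\langle\mathbf{u},P_{n+k}^2\rangle$ and $\langle\mathbf{v},Q_{n+m}^2\rangle$ and absorbing the polynomials $P_{n+k}$, $Q_{n+m}$ together with the finite combination coefficients into two polynomials, I should arrive at $\psi_{N+k+n}\mathbf{u}=\mathbf{D}_x^{k-m}(\phi_{M+m+n}\mathbf{v})$. Tracking degrees through the construction—$P_{n+k}$ contributes degree $n+k$, the $M$ shifts contribute the extra $M$, and similarly $N$ on the other side—gives the claimed degrees $N+k+n$ and $M+m+n$.

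The main obstacle I expect is \emph{degree bookkeeping} and verifying that the two constructed polynomials are genuinely of the stated degrees (in particular nonzero). The determinant condition $\det(\mathcal{A}_{M+N})\neq0$ is what must be leveraged to guarantee that the leading coefficients survive the linear combination; without it the top-degree terms could cancel and the degree claim would fail. A secondary technical point is handling the $\mathbf{D}_x^{k-m}$ operator correctly: since $\mathbf{D}_x$ does not commute with polynomial multiplication in the naive way (the correct rules are the Leibniz-type identities \eqref{def_D_xfu} and Proposition \ref{Leibniz-rule-NUL}), I must be careful that the $k-m$ residual derivatives land on the $\mathbf{v}$-side as written, rather than getting entangled with the polynomial factors. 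I would use \eqref{basis-Dx-derivatives} as the clean bridge precisely to avoid pushing derivatives through products by hand.
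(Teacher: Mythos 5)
Your plan follows the same architecture as the paper's proof (dual bases, band structure, inversion via $\det\mathcal{A}_{M+N}\neq0$, then ${\bf D}_x^k$ together with \eqref{basis-Dx-derivatives}), but it has one genuine gap at its center: the intermediate identity you commit to, namely a \emph{one-sided} finite expansion ${\bf a}_n^{[k]}=\sum_j c_{j,n}\,{\bf b}^{[m]}_{n+\mathrm{shift}}$, is false in general. Expanding in the dual basis of $(Q_l^{[m]})_{l\geq0}$ gives ${\bf a}_n^{[k]}=\sum_{l\geq0}\langle {\bf a}_n^{[k]},Q_l^{[m]}\rangle\,{\bf b}_l^{[m]}$, and the coefficients $\langle {\bf a}_n^{[k]},Q_l^{[m]}\rangle$ are change-of-basis coefficients between the two polynomial families, which relation \eqref{general_problem} does not force to vanish for large $l$; so no finite window exists for this pairing. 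The band structure only becomes visible through a third object that your plan omits: the auxiliary simple set $R_n:=\sum_{j=0}^{M}a_{j,n}P_{n-j}^{[k]}$ (the common value of the two sides of \eqref{general_problem}) and its dual basis $({\bf r}_n)_{n\geq0}$. Pairing against $R_l$ yields the finite expansions ${\bf a}_n^{[k]}=\sum_{l=n}^{n+M}a_{l-n,l}\,{\bf r}_l$ and ${\bf b}_n^{[m]}=\sum_{l=n}^{n+N}b_{l-n,l}\,{\bf r}_l$, and the hypothesis $\det\mathcal{A}_{M+N}\neq0$ is then used to eliminate the ${\bf r}_l$'s (expressing ${\bf r}_0,\ldots,{\bf r}_{M+N-1}$ through ${\bf a}_0^{[k]},\ldots,{\bf a}_{N-1}^{[k]}$ and ${\bf b}_0^{[m]},\ldots,{\bf b}_{M-1}^{[m]}$). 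What this produces is the correct \emph{two-sided} finite relation $\sum_{i=0}^{n+N}a'_{n,i}{\bf a}_i^{[k]}=\sum_{j=0}^{n+M}b'_{n,j}{\bf b}_j^{[m]}$, with extreme coefficients $a'_{n,n+N}=b_{N,M+N+n}\neq0$ and $b'_{n,n+M}=a_{M,M+N+n}\neq0$; your claimed one-sided version cannot be reached, and the step of your plan that relies on it would fail as written.

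Once that relation is in hand, the rest of your plan is sound and coincides with the paper: applying ${\bf D}_x^k$ to both sides, writing ${\bf D}_x^k={\bf D}_x^{k-m}{\bf D}_x^m$ on the $Q$-side, and invoking \eqref{basis-Dx-derivatives} twice (which, as you correctly anticipate, avoids pushing derivatives through polynomial products) converts the left side into a combination of $P_{k+i}{\bf u}$ and the right side into ${\bf D}_x^{k-m}$ of a combination of $Q_{m+j}{\bf v}$, giving \eqref{relation_with_dual_basis} after dividing by the norms $\langle{\bf u},P_{k+i}^2\rangle$ and $\langle{\bf v},Q_{m+j}^2\rangle$. One further detail your degree bookkeeping misses is the cross-over of indices: $\psi_{N+k+n}$ (on the ${\bf u}$ side) has degree $N+k+n$ because its top term is $P_{k+n+N}$ with coefficient proportional to $b_{N,M+N+n}$, and symmetrically the leading coefficient of $\phi_{M+m+n}$ is proportional to $a_{M,M+N+n}$. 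Thus the survival of the leading terms is guaranteed by the standing hypothesis $a_{M,n}b_{N,n}\neq0$, not by $\det\mathcal{A}_{M+N}\neq0$ as your plan suggests; the determinant condition is spent entirely on the elimination of the ${\bf r}_l$'s.
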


\begin{proof}
Define 
$$R_n(z)= \sum_{j=0} ^{M} a_{j,n} P_{n-j} ^{[k]} (z)\;, ~~\quad n=0,1,\ldots\;.$$ Then $(R_n)_{n\geq 0}$ is a simple set of polynomials.
Let $({\bf a}_n)_{n\geq 0}$, $({\bf b}_n)_{n\geq 0}$, $({\bf a}_n ^{[k]})_{n\geq 0}$, $({\bf b}_n ^{[m]})_{n\geq 0}$ and $({\bf r}_n)_{n\geq 0}$ be the associated dual basis to the sequences $(P_n)_{n\geq 0}$, $(Q_n)_{n\geq 0}$, $(P_n ^{[k]})_{n\geq 0}$, $(Q_n ^{[m]})_{n\geq 0}$ and $(R_n)_{n\geq 0}$, respectively.
We are going to prove that 
\begin{align}
{\bf a}_n ^{[k]} = \sum_{l=n} ^{n+M} a_{l-n,l}{\bf r}_l \;,\quad {\bf b}_n ^{[m]} = \sum_{l=n} ^{n+N} b_{l-n,l}{\bf r}_l\;,  \quad n=0,1,\ldots\;.\label{expressions_ank_Sxbnm}
\end{align}
Indeed, by definition of $R_n$, we obtain
\begin{align*}
\left\langle {\bf a}_n ^{[k]},R_l  \right\rangle=\sum_{i=0} ^M a_{i,l}\left\langle {\bf a}_n ^{[k]},P_{l-i} ^{[k]}  \right\rangle =\left\{
    \begin{array}{ll}
        a_{l-n,l} & \mbox{if } n\leq l\leq n+M \\
        0 & \mbox{otherwise.}
    \end{array}
\right.
\end{align*}
Similarly, using \eqref{general_problem}, we write
\begin{align*}
\left\langle  {\bf b}_n ^{[m]},R_l  \right\rangle=\sum_{i=0} ^N b_{i,l}\left\langle {\bf b}_n ^{[m]},Q_{l-i} ^{[m]}  \right\rangle =\left\{
    \begin{array}{ll}
        b_{l-n,l} & \mbox{if } n\leq l\leq n+N \\
        0 & \mbox{otherwise.}
    \end{array}
\right.
\end{align*}
Therefore \eqref{expressions_ank_Sxbnm} hold by writing 
$${\bf a}_n ^{[k]}=\sum_{l=0} ^{\infty} \left\langle  {\bf a}_n ^{[k]},R_l  \right\rangle {\bf r}_l \;,  \quad {\bf b}_n ^{[m]}=\sum_{l=0} ^{\infty} \left\langle  {\bf b}_n ^{[m]},R_l  \right\rangle {\bf r}_l\;, $$
and by using what is preceding. Taking $n=0,1,\ldots,N-1$ and $n=0,1,\ldots, M-1$ in the first and in the second equation of \eqref{expressions_ank_Sxbnm}, respectively, we obtain a system of equations whose matrix is $\mathcal{A}_{M+N}$ and since $det(\mathcal{A}_{M+N})\neq 0$, then we may write 
$$r_n= \sum_{i=0} ^{N-1} \widehat{a}_{n,i}{\bf a}_i ^{[k]} +\sum_{j=0} ^{M-1}\widehat{b}_{n,j}{\bf b}_j ^{[m]}\;,\quad
n=0,1,\ldots, M+N-1,$$
for some complex sequences $\widehat{a}_{n,i}$ and $\widehat{b}_{n,j}$. Even more we may also write
\begin{align}
\sum_{i=0} ^{n+N} a'_{n,i}{\bf a}_i ^{[k]} =\sum_{j=0} ^{n+M}b'_{n,j}{\bf b}_j ^{[m]}\;,\quad n=0,1,\ldots\;.\label{equation_with_basis00}
\end{align}
with $a'_{n,i}$ and $b'_{n,j}$ complex sequences with $a'_{n,n+N}=b_{N,M+N+n}$ and $b'_{n,n+M}=a_{M,M+N+n}$. 
Now since $k\geq m$, we apply ${\bf D}_x ^k$ to \eqref{equation_with_basis00} using \eqref{basis-Dx-derivatives} to obtain
$$\sum_{i=0} ^{n+N}(-1)^k \frac{\gamma_{k+i}!}{\gamma_i !} a'_{n,i}{\bf a}_{k+i} ={\bf D}_x ^{k-m}\Big(\sum_{j=0} ^{n+M}(-1)^m \frac{\gamma_{m+j}!}{\gamma_j !}b'_{n,j} {\bf b}_{m+j} \Big)\;\quad (n=0,1,\ldots) $$
Hence \eqref{relation_with_dual_basis} holds where
\begin{align*}
\psi_{N+k+n}(z)&=\sum_{i=0} ^{n+N} \frac{(-1)^k\gamma_{k+i}!}{\gamma_i !\left\langle {\bf u},P_{k+i} ^2 \right\rangle}a'_{n,i}P_{k+i}(z)\;, \\
\phi_{M+m+n}(z)&=\sum_{j=0} ^{n+M} \frac{(-1)^m\gamma_{m+j}!}{\gamma_j! \left\langle {\bf v},Q_{m+j} ^2 \right\rangle}b'_{n,j}Q_{m+j}(z)\;. \\
\end{align*}
In addition, since $a'_{n,n+N}b'_{n,n+M}=b_{N,M+N+N}a_{M,M+N+n} \neq 0$, we clearly have 
$\deg \psi_{N+k+n}=N+k+n$, $\deg \phi_{M+m+n}=M+m+n$. Thus the desired result follows.
\end{proof}
Let us now state the first result.
\begin{theorem}\label{main-result-general-for-m=k}
Let $({\bf u}, {\bf v})$ be a pair of regular functionals with respect to the pair of monic OPS $\big((P_n)_{n\geq 0},(Q_n)_{n\geq 0}\big)$. Assume that \eqref{general_problem} holds with $k\geq m$. Under the assumptions and conclusion of Lemma \ref{lemma1}, assume further that $\det (\mathcal{B}_{m-k+4}) \neq 0$, where $\mathcal{B}_{k-m+3}(z)=\Big[c_{i,j}(z)\Big]_{i,j=0} ^{k-m+2}$ is the following polynomial matrix of order $k-m+3$
\begin{align*}
c_{i,j}(z) = \left\{
\begin{array}{lclcl}
\displaystyle \texttt{U}_1\mathrm{D}_x\psi_{N+k+i}(z)-\alpha \mathrm{S}_x\psi_{N+k+i}(z), ~ \textit{if}~~j=0  \\ [7pt]
\alpha \mathrm{T}_{k-m+1,j}\phi_{M+m+i}(z),~\textit{otherwise .}
\end{array}
\right. 
\end{align*}

Then the following hold.
\begin{itemize}
\item[i. ]If $m=k$, then ${\bf u}$ and ${\bf v}$ are connected by a rational modification and so ${\bf u}$ is semiclassical if and only if ${\bf v}$ is so.\\
\item[ii. ]If $k>m$, then  ${\bf u}$ and ${\bf S}_x ^{k-m}{\bf v}$ are semiclassical functionals and in addition ${\bf S}_x{\bf u}$ and ${\bf S}_x ^{k-m+1}{\bf v}$ are connected by a rational modification. That is, there exist six nonzero polynomials $\phi_1$, $\phi_2$, $\psi_1$ $\phi_3$, $\psi_2$ and $\psi_3$, such that
$$\phi_1{\bf D}_x{\bf u}=\psi_1{\bf S}_x{\bf u}\;,~ \phi_2{\bf S}_x{\bf u}=\psi_2{\bf S}_x ^{k-m+1}{\bf v}\;,~ \phi_3{\bf D}_x{\bf S}_x ^{k-m}{\bf v}=\psi_3{\bf S}_x ^{k-m+1} {\bf v}\;.$$
  
\end{itemize}
\end{theorem}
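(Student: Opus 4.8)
The plan is to feed the structural identity \eqref{relation_with_dual_basis} of Lemma \ref{lemma1} into the Leibniz formula \eqref{leibnizfor-NUL} and then read off functional equations from a linear system whose coefficient matrix is $\mathcal{B}_{k-m+3}$. Case (i) is then immediate: when $m=k$ we have ${\bf D}_x^{k-m}=\mathrm{I}$, so taking $n=0$ in \eqref{relation_with_dual_basis} already gives $\psi_{N+k}{\bf u}=\phi_{M+k}{\bf v}$ with both polynomials nonzero, which is exactly a rational modification, and the semiclassical equivalence follows from the Proposition established at the start of this section. So (i) costs essentially nothing, and the content lies in (ii).

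For (ii) put $p:=k-m\ge1$ and apply ${\bf D}_x$ to \eqref{relation_with_dual_basis}, so that ${\bf D}_x(\psi_{N+k+n}{\bf u})={\bf D}_x^{p+1}(\phi_{M+m+n}{\bf v})$ for every $n$. Expanding the left side by \eqref{def_D_xfu} produces the two functionals ${\bf D}_x{\bf u}$ and ${\bf S}_x{\bf u}$, while expanding the right side by \eqref{leibnizfor-NUL} at order $p+1$ produces the $p+2$ functionals ${\bf D}_x^{p+1-j}{\bf S}_x^{j}{\bf v}$, $j=0,\dots,p+1$, with coefficients $\mathrm{T}_{p+1,j}\phi_{M+m+n}$. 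Clearing the factor $\alpha^{-1}$ gives, for each $n$,
\begin{equation*}
c_{n,0}\,{\bf D}_x{\bf u}+\alpha\sum_{j=0}^{p+1}\mathrm{T}_{p+1,j}\phi_{M+m+n}\,{\bf D}_x^{\,p+1-j}{\bf S}_x^{\,j}{\bf v}=\mathrm{D}_x\psi_{N+k+n}\,{\bf S}_x{\bf u},
\end{equation*}
with $c_{n,0}=\texttt{U}_1\mathrm{D}_x\psi_{N+k+n}-\alpha\mathrm{S}_x\psi_{N+k+n}$, which are precisely the entries recorded in the rows of $\mathcal{B}_{k-m+3}$ (recall $k-m+1=p+1$).

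Next I would run this identity for the consecutive indices $n=0,1,\dots,p+2$ and view the resulting $p+3$ identities as a linear system in the unknown functionals ${\bf D}_x{\bf u}$ and ${\bf D}_x^{p+1-j}{\bf S}_x^{j}{\bf v}$ ($j=0,\dots,p+1$), carrying ${\bf S}_x{\bf u}$ on the right as a parameter with coefficient $\mathrm{D}_x\psi_{N+k+n}$. Its coefficient matrix is $\mathcal{B}_{k-m+3}(z)$, so the hypothesis $\det\mathcal{B}_{k-m+3}\neq0$ lets me solve by Cramer's rule: each unknown equals a cofactor of $\mathcal{B}$ times ${\bf S}_x{\bf u}$, divided by $\det\mathcal{B}$. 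Clearing $\det\mathcal{B}$, the ${\bf D}_x{\bf u}$ component yields $\phi_1{\bf D}_x{\bf u}=\psi_1{\bf S}_x{\bf u}$ with $\phi_1=\det\mathcal{B}_{k-m+3}$, so ${\bf u}$ is semiclassical; the $j=p+1$ component yields $\phi_2{\bf S}_x{\bf u}=\psi_2{\bf S}_x^{p+1}{\bf v}$, the announced rational modification between ${\bf S}_x{\bf u}$ and ${\bf S}_x^{p+1}{\bf v}$; and the $j=p$ component yields a relation of the same shape for ${\bf D}_x{\bf S}_x^{p}{\bf v}$. Eliminating ${\bf S}_x{\bf u}$ between the last two gives $\phi_3{\bf D}_x{\bf S}_x^{p}{\bf v}=\psi_3{\bf S}_x^{p+1}{\bf v}$, which, since ${\bf S}_x({\bf S}_x^{p}{\bf v})={\bf S}_x^{p+1}{\bf v}$, is exactly the semiclassical equation for ${\bf S}_x^{p}{\bf v}$. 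This furnishes all six polynomials.

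The step I expect to be genuinely delicate is not the elimination but establishing that the six output polynomials are nonzero, so that the equations are nontrivial and ${\bf u}$, ${\bf S}_x^{p}{\bf v}$ are honestly semiclassical rather than annihilated. For this I would lean on the exact degrees $\deg\psi_{N+k+n}=N+k+n$ and $\deg\phi_{M+m+n}=M+m+n$ from Lemma \ref{lemma1}, the bound $\deg\mathrm{T}_{n,k}f\le\deg f-k$, and the explicit form of $\texttt{U}_1$, to track the top-degree entry in each column of $\mathcal{B}$ and argue that $\det\mathcal{B}_{k-m+3}\not\equiv0$ forces the relevant cofactor combinations to be nonzero as well. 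A secondary bookkeeping point is to confirm that the column indexing of $\mathcal{B}_{k-m+3}$ lines up with the order-$(p+1)$ Leibniz expansion produced after applying ${\bf D}_x$.
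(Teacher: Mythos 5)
Your proposal follows the paper's proof essentially step for step: part (i) read off directly from \eqref{relation_with_dual_basis} when $m=k$, and for $k>m$ the same application of ${\bf D}_x$ to \eqref{relation_with_dual_basis} via \eqref{def_D_xfu} and \eqref{leibnizfor-NUL}, the same linear system for $n=0,\ldots,k-m+2$ with coefficient matrix $\mathcal{B}_{k-m+3}$, Cramer-type solving under $\det\mathcal{B}_{k-m+3}\neq0$ to get $B{\bf D}_x{\bf u}=\pi_1{\bf S}_x{\bf u}$, $B{\bf D}_x{\bf S}_x^{k-m}{\bf v}=\pi_2{\bf S}_x{\bf u}$, $B{\bf S}_x^{k-m+1}{\bf v}=\pi_3{\bf S}_x{\bf u}$, and elimination of ${\bf S}_x{\bf u}$ to obtain the semiclassical equation for ${\bf S}_x^{k-m}{\bf v}$. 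The nonvanishing issue you flag at the end is in fact glossed over in the paper as well (it simply asserts the $\pi_i$ are nonzero), so your extra caution there is warranted but does not mark a departure from the paper's argument.
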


\begin{proof}
The case $k=m$ is trivial. Assume $k>m$. Now we apply ${\bf D}_x$ to \eqref{relation_with_dual_basis} using \eqref{def_D_xfu} and \eqref{leibnizfor-NUL} to obtain
\begin{align*}
\mathrm{D}_x\psi_{N+k+n}{\bf S}_x{\bf u}&=\Big(\texttt{U}_1\mathrm{D}_x\psi_{N+k+n}-\alpha \mathrm{S}_x\psi_{N+k+n} \Big) {\bf D}_x{\bf u} \\
&+\alpha \sum_{j=0} ^{k-m+1}\mathrm{T}_{k-m+1,j}\phi_{M+m+n} {\bf D}_x ^{k-m+1-j}{\bf S}_x ^j {\bf v} \;,
\end{align*}
for $n=0,1,\ldots$. Taking $n=0,1,2,\ldots, k-m+2$, we obtain the following system 
\begin{align*}
\begin{bmatrix}
\mathrm{D}_x\psi_{N+k}{\bf S}_x{\bf u}\\
\mathrm{D}_x\psi_{N+k+1}{\bf S}_x{\bf u}\\
\mathrm{D}_x\psi_{N+k+2}{\bf S}_x{\bf u}\\
.\\
.\\
.\\
\mathrm{D}_x\psi_{N+m+2}{\bf S}_x{\bf u}\\
\mathrm{D}_x\psi_{N+m+3}{\bf S}_x{\bf u}
\end{bmatrix} =\mathcal{B}_{k-m+3} \begin{bmatrix}
{\bf D}_x{\bf u}\\
{\bf D}_x ^{k-m+1}{\bf v}\\
{\bf D}_x ^{k-m}{\bf S}_x{\bf v}\\
.\\
.\\
.\\
{\bf D}_x{\bf S}_x ^{k-m}{\bf v}\\
{\bf S}_x ^{k-m+1}{\bf v}
\end{bmatrix}\;.
\end{align*}
Since $B(z)=\det (\mathcal{B}_{k-m+3}(z))\neq 0$, then we can solve this system for ${\bf D}_x{\bf u}$, ${\bf D}_x{\bf S}_x ^{k-m}{\bf v}$ and ${\bf S}_x ^{k-m+1}{\bf v}$. That is, there exist nonzero polynomials $\pi_1$, $\pi_2$ and $\pi_3$ such that 
\begin{align}
&B{\bf D}_x{\bf u}=\pi_1{\bf S}_x{\bf u}\;, \label{first-solu}\\
&B {\bf D}_x{\bf S}_x ^{k-m}{\bf v}=\pi_2{\bf S}_x{\bf u}\;, \label{second-solu}\\
&B {\bf S}_x ^{k-m+1}{\bf v}=\pi_3{\bf S}_x{\bf u}\;. \label{third-solu}
\end{align}
In addition from \eqref{second-solu} and \eqref{third-solu}, we obtain
\begin{align}
\pi_3 {\bf D}_x{\bf S}_x ^{k-m}{\bf v}=\pi_2 {\bf S}_x ^{k-m+1}{\bf v}\;.\label{from-secon-and-third-solu}
\end{align}
The result follows from \eqref{first-solu}, \eqref{second-solu} and \eqref{from-secon-and-third-solu}.
\end{proof}

\begin{remark}\label{clarifications}
It is important to notice that for a $(M,N)$-coherent pair of measures of order $(m,k)$ on nonuniform lattices with $k\neq m$, only one of the involved OPS is semiclassical and the semiclassical character of the other one cannot be insured. In addition, even the rational modification of such OPS cannot be insured. These make the difference with known results with operators $d/dx$, $\triangle_{\omega}$ and $D_q$ in \cite{RJPS2015, KLM2001, MBP1993, MMM2001, JMPN2015, JP2008, JP2013} and some references therein. 
\end{remark}

Remark \ref{clarifications} also applies to the notion of $\pi_N$-coherent pair of measures of order $(m,k)$ with index $M$. This is the case of \eqref{coherent-pair-intro-b}. For instance, using the same ideas, one may deduce and prove rigorously the following one, which is the situation of \eqref{coherent-pair-intro-b} with $k=0$.

\begin{theorem}\label{pi-N-main-result-general}
Let $({\bf u}, {\bf v})$ be a pair of regular functionals with respect to the pair of monic OPS $\big((P_n)_{n\geq 0},(Q_n)_{n\geq 0}\big)$. Assume that the following equation holds
\begin{align}\label{last-situation-k=0}
\pi_N(x)P_{n}^{[m]} (z) = \sum_{j=n-M}^{N} c_{n,j} Q_{j} (z)\;, \quad c_{n,n-M}\neq 0\;,~~(n=0,1,\ldots)\;,
\end{align}
where $\pi_N$ is a polynomial of degree $N$, $c_{n,j}$, $j=0,1,\ldots, n$, $n=0,1,\ldots$, are complex numbers and $M,m \in \mathbb{N}_0$. Assume that  
$$c_{n,n-M} \neq 0 \quad \quad (n=0,1,\ldots)\;.$$
Let's define 
\begin{align*}
\rho_{M+m+n} (z)=\sum_{j=n-N} ^{n+M} (-1)^m c_{j,n}\frac{\gamma_{m+j}!\left\langle {\bf v},Q_{n} ^2 \right\rangle}{\gamma_j !\left\langle {\bf u},P_{m+j} ^2 \right\rangle}P_{m+j}(z)\quad (n=0,1,\ldots)\;,
\end{align*}
so that $\deg \rho_{M+m+n}=M+m+n$. Assume further that $\det (\mathcal{C}_{m+3}) \neq 0$, where $\mathcal{C}_{m+3}(z)=\Big[e_{i,j}(z)\Big]_{i,j=0} ^{m+2}$ is the following polynomial matrix of order $m+3$
\begin{align*}
e_{i,j}(z) = \left\{
\begin{array}{lclcl}
\displaystyle \texttt{U}_1\mathrm{D}_x\rho_{M+m+i}(z)-\alpha \mathrm{S}_x\rho_{M+m+i}(z), ~ \textit{if}~~j=0  \\ [7pt]
\alpha \mathrm{T}_{m+1,j}(\pi_N Q_i)(z),~\textit{otherwise .}
\end{array}
\right. 
\end{align*}
Then the following hold.
\begin{itemize}
\item[i. ]If $m=0$, then ${\bf u}$ and ${\bf v}$ are connected by a rational modification and so ${\bf u}$ is semiclassical if and only if ${\bf v}$ is so.\\
\item[ii. ]If $m\neq 0$, then  ${\bf u}$ and ${\bf S}_x ^{m}{\bf v}$ are semiclassical functionals and in addition ${\bf S}_x{\bf u}$ and ${\bf S}_x ^{m+1}{\bf v}$ are connected by a rational modification. 
\end{itemize}
\end{theorem}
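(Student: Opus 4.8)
The plan is to follow, step by step, the proofs of Lemma \ref{lemma1} and Theorem \ref{main-result-general-for-m=k}, specialising the two orders of differentiation to $m$ on the $P$-side and $0$ on the $Q$-side (so that ``$k-m$'' there becomes $m$ here). The heart of the argument is to turn the polynomial identity \eqref{last-situation-k=0} into its distributional counterpart
\begin{align}\label{key-rel-pN}
\rho_{M+m+n}\,{\bf u}={\bf D}_x ^{m}\bigl(\pi_N Q_n\,{\bf v}\bigr)\;,\qquad n=0,1,\ldots\;,
\end{align}
the exact analogue of \eqref{relation_with_dual_basis}. To produce \eqref{key-rel-pN} I first read \eqref{last-situation-k=0} as computing the coordinates of $\pi_N{\bf q}_n$ in the dual basis $({\bf p}_j ^{[m]})_{j\geq0}$ of $(P_j ^{[m]})_{j\geq0}$, where $({\bf q}_n)_{n\geq0}$ is the dual basis of $(Q_n)_{n\geq0}$: pairing $\pi_N{\bf q}_n$ with $P_j ^{[m]}$ and using orthogonality gives $\bigl\langle\pi_N{\bf q}_n,P_j ^{[m]}\bigr\rangle=\bigl\langle{\bf q}_n,\pi_N P_j ^{[m]}\bigr\rangle=c_{j,n}$ for $n-N\leq j\leq n+M$, whence $\pi_N{\bf q}_n=\sum_{j=n-N}^{n+M}c_{j,n}{\bf p}_j ^{[m]}$.

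Applying ${\bf D}_x ^{m}$ to this expansion and using \eqref{basis-Dx-derivatives} to replace ${\bf D}_x ^{m}{\bf p}_j ^{[m]}$ by $(-1)^m(\gamma_{j+m}!/\gamma_j!)\,{\bf p}_{j+m}$, then substituting the canonical forms ${\bf q}_n=\langle{\bf v},Q_n ^2\rangle^{-1}Q_n{\bf v}$ and ${\bf p}_{j+m}=\langle{\bf u},P_{j+m} ^2\rangle^{-1}P_{j+m}{\bf u}$ from \eqref{expression-an}, and finally multiplying through by $\langle{\bf v},Q_n ^2\rangle$, makes the right-hand side collapse precisely to $\rho_{M+m+n}\,{\bf u}$, which is \eqref{key-rel-pN}. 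The degree claim $\deg\rho_{M+m+n}=M+m+n$ is then read off from the top summand $j=n+M$, whose coefficient $c_{n+M,n}$ is nonzero because the hypothesis $c_{n',n'-M}\neq0$ evaluated at $n'=n+M$ reads $c_{n+M,n}\neq0$.

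With \eqref{key-rel-pN} established, the two cases run exactly as in Theorem \ref{main-result-general-for-m=k}. If $m=0$, then \eqref{key-rel-pN} at $n=0$ (with $Q_0=1$) is simply $\rho_M\,{\bf u}=\pi_N\,{\bf v}$, a rational modification, and the claimed equivalence of semiclassical character is immediate from the first Proposition of this section. If $m\neq0$, I apply ${\bf D}_x$ to \eqref{key-rel-pN}, expanding the left side by \eqref{def_D_xfu} and the right side by the Leibniz rule \eqref{leibnizfor-NUL} applied to $f=\pi_N Q_n$ at order $m+1$, obtaining
\begin{align*}
\mathrm{D}_x\rho_{M+m+n}\,{\bf S}_x{\bf u}
&=\bigl(\texttt{U}_1\mathrm{D}_x\rho_{M+m+n}-\alpha\,\mathrm{S}_x\rho_{M+m+n}\bigr){\bf D}_x{\bf u}\\
&\quad+\alpha\sum_{j=0}^{m+1}\mathrm{T}_{m+1,j}(\pi_N Q_n)\,{\bf D}_x ^{m+1-j}{\bf S}_x ^{j}{\bf v}\;.
\end{align*}
Letting $n=0,1,\ldots,m+2$ reproduces, with $k-m$ replaced by $m$, the linear system of the proof of Theorem \ref{main-result-general-for-m=k}, whose coefficient matrix is $\mathcal{C}_{m+3}(z)$ and whose unknown column is $\bigl({\bf D}_x{\bf u},{\bf D}_x ^{m+1}{\bf v},{\bf D}_x ^{m}{\bf S}_x{\bf v},\ldots,{\bf D}_x{\bf S}_x ^{m}{\bf v},{\bf S}_x ^{m+1}{\bf v}\bigr)^{\mathsf{T}}$. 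Since $\det\mathcal{C}_{m+3}(z)\neq0$, multiplying by the adjugate over the polynomial action on $\mathcal{P}^*$ solves for three of the unknowns in terms of the common factor ${\bf S}_x{\bf u}$, giving $C{\bf D}_x{\bf u}=\pi_1{\bf S}_x{\bf u}$, $C{\bf D}_x{\bf S}_x ^{m}{\bf v}=\pi_2{\bf S}_x{\bf u}$ and $C{\bf S}_x ^{m+1}{\bf v}=\pi_3{\bf S}_x{\bf u}$ with $C=\det\mathcal{C}_{m+3}$. The first says ${\bf u}$ is semiclassical; eliminating ${\bf S}_x{\bf u}$ from the last two yields $\pi_3{\bf D}_x({\bf S}_x ^{m}{\bf v})=\pi_2{\bf S}_x({\bf S}_x ^{m}{\bf v})$, so ${\bf S}_x ^{m}{\bf v}$ is semiclassical; and the third identity is the asserted rational modification between ${\bf S}_x{\bf u}$ and ${\bf S}_x ^{m+1}{\bf v}$.

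I expect the main obstacle to be the passage to \eqref{key-rel-pN}: one must transpose correctly from the given expansion of $\pi_N P_n ^{[m]}$ over $(Q_j)$ to the expansion of $\pi_N{\bf q}_n$ over $({\bf p}_j ^{[m]})$, track the shifted ranges $n-N\leq j\leq n+M$, and verify that the normalising constants $\gamma_{j+m}!/\gamma_j!$, $\langle{\bf v},Q_n ^2\rangle$ and $\langle{\bf u},P_{j+m} ^2\rangle$ recombine to reproduce $\rho_{M+m+n}$ verbatim. The only other delicate point, exactly as in Theorem \ref{main-result-general-for-m=k}, is that $\pi_1,\pi_2,\pi_3$ be nonzero; this rests on $\deg\rho_{M+m+n}=M+m+n$ (hence on $c_{n,n-M}\neq0$) together with the regularity of ${\bf u}$ and ${\bf v}$, which prevents the nonzero factor $C$ from annihilating the functionals involved.
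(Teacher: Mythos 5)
Your proof is correct and is precisely the argument the paper intends: the paper gives no separate proof of Theorem \ref{pi-N-main-result-general}, stating only that it follows ``using the same ideas'' as Lemma \ref{lemma1} and Theorem \ref{main-result-general-for-m=k}, and your key relation $\rho_{M+m+n}\,{\bf u}={\bf D}_x^{m}\big(\pi_N Q_n\,{\bf v}\big)$ followed by the $(m+3)\times(m+3)$ system with coefficient matrix $\mathcal{C}_{m+3}$ is exactly that specialization (with $k-m$ replaced by $m$), matching the paper's explicit definitions of $\rho_{M+m+n}$ and of the entries $e_{i,j}$. Your dual-basis computation $\pi_N{\bf q}_n=\sum_{j=n-N}^{n+M}c_{j,n}{\bf p}_j^{[m]}$ even correctly exploits that the left-hand side of \eqref{last-situation-k=0} is a single monic term, which is why no analogue of the $\det(\mathcal{A}_{M+N})\neq0$ hypothesis is needed here --- consistent with the theorem's statement.
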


\begin{remark}
For $q$-quadratic lattices, we emphasize that some particular cases of \eqref{last-situation-k=0} have been considered by several authors. For instance, the case of \eqref{last-situation-k=0} where $(P_n)_{n\geq 0}\equiv (Q_n)_{n\geq 0}$, $N\leq 2$, $M=1$ and $m=1$ is solved in \cite{KDP2022c} as an answer to a conjecture posed by M. E. H. Ismail in \cite{I2005}. Furthermore, the case of \eqref{last-situation-k=0} where $(P_n)_{n\geq 0}\equiv (Q_n)_{n\geq 0}$, $N\leq 2k$ and $M=N$ is treated in a recent work available in \cite{KCDM2023}.
\end{remark}

Despite these results in Theorem \ref{main-result-general-for-m=k} and Theorem \ref{pi-N-main-result-general}, there are still some interesting questions related to the developed theory. Firstly, the situation of \eqref{coherent-pair-intro-b} where $k\neq 0$ therein is still open problem. Secondly, an interesting question is to provide non-trivial examples for \eqref{coherent-pair-intro} and \eqref{coherent-pair-intro-b} considered in this note. As noticed in \cite[p.8]{RJPS2015}, this is not an easy task. Finally, as mentioned at the introduction, this notion of coherent pair of measures has a connection with Sobolev OPS with application in approximation theory. For OPS on lattices, this is an interesting question to be treated in a possible future work.

\section*{Acknowledgements }
The author was partially supported by CMUP, member of LASI, which is financed by national funds through FCT - Fundac\~ao para a Ci\^encia e a Tecnologia, I.P., under the projects with reference UIDB/00144/2020 and UIDP/00144/2020. The author also thank CMUC-UIDB/00324/2020, funded by the Portuguese Government through FCT/MCTES for offering him a one month research visit, period during which this work was initiated under the guidance of Dr. Kenier Castillo.

{

\end{document}